\documentclass[12pt]{amsart}

\usepackage[top=1.4in,bottom=1.275in,left=1.2in,right=1.2in]{geometry}

\usepackage[utf8]{inputenc}
\usepackage[T1]{fontenc}
\usepackage[english]{babel}\usepackage[activate={true,nocompatibility},final,tracking=true,factor=1100,stretch=10,shrink=10,]{microtype}\microtypecontext{spacing=nonfrench}
\usepackage{placeins}
\usepackage{comment}

\PassOptionsToPackage{usenames,dvipsnames}{xcolor}

\usepackage{amsmath}\usepackage{amssymb}\usepackage{amsthm}\usepackage{amscd}

\usepackage{bm}\usepackage{mathrsfs}\usepackage{mathtools}\usepackage{centernot}
\usepackage{xfrac}
\usepackage[only,llbracket,rrbracket]{stmaryrd}\SetSymbolFont{stmry}{bold}{U}{stmry}{m}{n}
\usepackage{calligra}
\DeclareMathAlphabet{\mathcalligra}{T1}{calligra}{m}{n}
\DeclareFontShape{T1}{calligra}{m}{n}{<->s*[1.1]callig15}{}
\DeclareFontFamily{OT1}{pzc}{}
\DeclareFontShape{OT1}{pzc}{m}{it}{<-> s * [1.0] pzcmi7t}{}
\DeclareMathAlphabet{\mathpzc}{OT1}{pzc}{m}{it}

\usepackage[mathscr]{eucal}
\usepackage{tikz}
\usepackage{pgf}
\usepackage{pgfplots}
\usepackage{pgfplotstable}
\pgfplotsset{compat=newest}

\usepackage{graphicx}\usepackage[font=small,figurewithin=section]{caption}\usepackage[labelformat=simple]{subcaption}
\usepackage{float}\usepackage[hypertexnames=false]{hyperref}\usepackage[capitalize,nameinlink,noabbrev]{cleveref}

\crefname{equation}{}{}
\usepackage{autonum}\usepackage{tikz}
\usepackage{verbatim}

\definecolor{CeruleanRef}{RGB}{12,127,172}
\hypersetup{colorlinks=true,allcolors=CeruleanRef}
\hypersetup{pdfstartview=FitB,pdfpagemode=UseNone}

\makeatletter
\renewcommand*{\eqref}[1]{  \hyperref[{#1}]{\textup{\tagform@{\ref*{#1}}}}}
\makeatother

\newtheorem{theorem}{Theorem}

\newtheorem{corollary}[theorem]{Corollary}
\newtheorem{proposition}[theorem]{Proposition}

\theoremstyle{definition}

\theoremstyle{remark}
\newtheorem{remark}[theorem]{Remark}

\usepackage{todonotes}

\let\inf\relax \DeclareMathOperator*\inf{\vphantom{p}inf}
\let\min\relax \DeclareMathOperator*\min{\vphantom{p}min}
 
\let\subset\relax \DeclareMathOperator{\subset}{\subseteq}
\let\supset\relax \DeclareMathOperator{\supset}{\supseteq}

\def\quotient#1#2{    \raise0.1pt\hbox{$#1$}/\lower0.6pt\hbox{$#2$}}

\makeatletter
\newcommand{\normm}{\@ifstar\@normms\@normm}
\newcommand{\@normms}[1]{  \left|\mkern-1.5mu\left|\mkern-1.5mu\left|
   #1
  \right|\mkern-1.5mu\right|\mkern-1.5mu\right|
}
\newcommand{\@normm}[2][]{  \mathopen{#1|\mkern-1.5mu#1|\mkern-1.5mu#1|}
  #2
  \mathclose{#1|\mkern-1.5mu#1|\mkern-1.5mu#1|}
}
\makeatother

\newsavebox\CBox

\makeatletter
\newcommand{\leqnomode}{\tagsleft@true\let\veqno\@@leqno}
\newcommand{\reqnomode}{\tagsleft@false\let\veqno\@@eqno}
\makeatother

\newcommand{\R}{\mathbb{R}}

 \newcommand{\mesh}{\mcT_{h}}   \DeclareMathOperator{\dd}{d} \newcommand{\bdry}{\partial}

\newcommand{\bzeta}{\bm{\zeta}}

\newcommand{\blambda}{\bm{\lambda}}

\newcommand{\bnu}{\bm{\nu}}

\newcommand{\bsigma}{\bm{\sigma}}
\newcommand{\btau}{\bm{\tau}}

\renewcommand\Omega{\varOmega}
\renewcommand\Gamma{\varGamma}
\renewcommand\Pi{\varPi}

\DeclareMathOperator{\grad}{grad}

\let\div\relax \DeclareMathOperator{\div}{div}

\makeatletter
\newcommand{\dashto}[1][2pt]{
  \settowidth{\@tempdima}{${}\rightarrow{}$}
  \makebox[\@tempdima]{${}\rightarrow{}$}  \makebox[-\@tempdima]{\hspace{-0.1\@tempdima}\color{white}\rule[0.5ex]{#1}{1pt}}  \makebox[\@tempdima]{}  }
\makeatother

\newcommand{\upperone}[2]{\raisebox{-0.2\height}{$#1{}^1\!$}}
\newcommand{\upone}{\mathpalette\upperone\relax}
\newcommand{\lowertwo}[2]{\raisebox{-0.1\height}{$#1{}_{\!2}$}}

\newcommand{\lowtwo}{\mathpalette\lowertwo\relax}

\newcommand{\frachalf}[2]{\raisebox{0.0\height}{$#1{}\upone/\lowtwo$}}

\newcommand{\onehalf}{\mathpalette\frachalf\relax}

\newcommand{\mcL}{\mathcal{L}}

\newcommand{\mcT}{\mathcal{T}}

\newcommand{\bmb}{\bm{b}}

\newcommand{\bmf}{\bm{f}}

\newcommand{\bmu}{\bm{u}}
\newcommand{\bmv}{\bm{v}}

\newcommand{\bmA}{\bm{A}}

\newcommand{\bmI}{\bm{I}}

\newcommand{\bmV}{\bm{V}}

\newcommand{\bfx}{\mathbf{x}}

\DeclareSymbolFont{sfletters}{OML}{cmbrm}{m}{it}
\DeclareMathSymbol{\sfalpha}{\mathord}{sfletters}{"0B}
\DeclareMathSymbol{\sfbeta}{\mathord}{sfletters}{"0C}
\DeclareMathSymbol{\sfgamma}{\mathord}{sfletters}{"0D}
\DeclareMathSymbol{\sfdelta}{\mathord}{sfletters}{"0E}
\DeclareMathSymbol{\sfepsilon}{\mathord}{sfletters}{"0F}
\DeclareMathSymbol{\sfzeta}{\mathord}{sfletters}{"10}
\DeclareMathSymbol{\sfeta}{\mathord}{sfletters}{"11}
\DeclareMathSymbol{\sftheta}{\mathord}{sfletters}{"12}
\DeclareMathSymbol{\sfiota}{\mathord}{sfletters}{"13}
\DeclareMathSymbol{\sfkappa}{\mathord}{sfletters}{"14}
\DeclareMathSymbol{\sflambda}{\mathord}{sfletters}{"15}
\DeclareMathSymbol{\sfmu}{\mathord}{sfletters}{"16}
\DeclareMathSymbol{\sfnu}{\mathord}{sfletters}{"17}
\DeclareMathSymbol{\sfxi}{\mathord}{sfletters}{"18}
\DeclareMathSymbol{\sfpi}{\mathord}{sfletters}{"19}
\DeclareMathSymbol{\sfrho}{\mathord}{sfletters}{"1A}
\DeclareMathSymbol{\sfsigma}{\mathord}{sfletters}{"1B}
\DeclareMathSymbol{\sftau}{\mathord}{sfletters}{"1C}
\DeclareMathSymbol{\sfupsilon}{\mathord}{sfletters}{"1D}
\DeclareMathSymbol{\sfphi}{\mathord}{sfletters}{"1E}
\DeclareMathSymbol{\sfchi}{\mathord}{sfletters}{"1F}
\DeclareMathSymbol{\sfpsi}{\mathord}{sfletters}{"20}
\DeclareMathSymbol{\sfomega}{\mathord}{sfletters}{"21}
\DeclareMathSymbol{\sfvarepsilon}{\mathord}{sfletters}{"22}
\DeclareMathSymbol{\sfvartheta}{\mathord}{sfletters}{"23}
\DeclareMathSymbol{\sfvarpi}{\mathord}{sfletters}{"24}
\DeclareMathSymbol{\sfvarrho}{\mathord}{sfletters}{"25}
\DeclareMathSymbol{\sfvarsigma}{\mathord}{sfletters}{"26}
\DeclareMathSymbol{\sfvarphi}{\mathord}{sfletters}{"27}
\DeclareSymbolFont{bsfletters}{OML}{cmbrm}{b}{it}
\DeclareMathSymbol{\bsfalpha}{\mathord}{sfletters}{"0B}
\DeclareMathSymbol{\bsfbeta}{\mathord}{sfletters}{"0C}
\DeclareMathSymbol{\bsfgamma}{\mathord}{sfletters}{"0D}
\DeclareMathSymbol{\bsfdelta}{\mathord}{sfletters}{"0E}
\DeclareMathSymbol{\bsfepsilon}{\mathord}{sfletters}{"0F}
\DeclareMathSymbol{\bsfzeta}{\mathord}{sfletters}{"10}
\DeclareMathSymbol{\bsfeta}{\mathord}{sfletters}{"11}
\DeclareMathSymbol{\bsftheta}{\mathord}{sfletters}{"12}
\DeclareMathSymbol{\bsfiota}{\mathord}{sfletters}{"13}
\DeclareMathSymbol{\bsfkappa}{\mathord}{sfletters}{"14}
\DeclareMathSymbol{\bsflambda}{\mathord}{sfletters}{"15}
\DeclareMathSymbol{\bsfmu}{\mathord}{sfletters}{"16}
\DeclareMathSymbol{\bsfnu}{\mathord}{sfletters}{"17}
\DeclareMathSymbol{\bsfxi}{\mathord}{sfletters}{"18}
\DeclareMathSymbol{\bsfpi}{\mathord}{sfletters}{"19}
\DeclareMathSymbol{\bsfrho}{\mathord}{sfletters}{"1A}
\DeclareMathSymbol{\bsfsigma}{\mathord}{sfletters}{"1B}
\DeclareMathSymbol{\bsftau}{\mathord}{sfletters}{"1C}
\DeclareMathSymbol{\bsfupsilon}{\mathord}{sfletters}{"1D}
\DeclareMathSymbol{\bsfphi}{\mathord}{sfletters}{"1E}
\DeclareMathSymbol{\bsfchi}{\mathord}{sfletters}{"1F}
\DeclareMathSymbol{\bsfpsi}{\mathord}{sfletters}{"20}
\DeclareMathSymbol{\bsfomega}{\mathord}{sfletters}{"21}
\DeclareMathSymbol{\bsfvarepsilon}{\mathord}{sfletters}{"22}
\DeclareMathSymbol{\bsfvartheta}{\mathord}{sfletters}{"23}
\DeclareMathSymbol{\bsfvarpi}{\mathord}{sfletters}{"24}
\DeclareMathSymbol{\bsfvarrho}{\mathord}{sfletters}{"25}
\DeclareMathSymbol{\bsfvarsigma}{\mathord}{sfletters}{"26}
\DeclareMathSymbol{\bsfvarphi}{\mathord}{sfletters}{"27}

\newsavebox{\foobox}

\newcommand{\LL}{\mathcal{L}}

\newcommand{\revised}[1]{#1}

\title[A priori error analysis of high-order LL* methods]{A priori error analysis of high-order LL* (FOSLL*) finite element methods}
\author[Keith]{Brendan Keith}
\address[Keith]{
  Lawrence Livermore National Laboratory,
  Livermore, CA 94550
  }
\email{keith10@llnl.gov}

\keywords{
	$\mcL\mcL^\ast$ method, FOSLL* method, minimum norm methods, minimum residual methods, a priori error analysis.
}

\subjclass[2010]{65N12,                  65N15,                  65N30} \date{}

\begin{document}

\begin{abstract}
  A number of non-standard finite element methods have been proposed in recent years, each of which derives from a specific class of PDE-constrained norm minimization problems.
  The most notable examples are $\mcL\mcL^\ast$ methods.
  In this work, we argue that all high-order methods in this class should be expected to deliver substandard uniform $h$-refinement convergence rates.
  In fact, one may not even see rates proportional to the polynomial order $p>1$ when the exact solution is a constant function.
      We show that the convergence rate is limited by the regularity of an extraneous Lagrange multiplier variable which naturally appears via a saddle-point analysis.
    In turn, limited convergence rates appear because the regularity of this Lagrange multiplier is determined, in part, by the geometry of the domain.
      Numerical experiments support our conclusions.
          \end{abstract}

\maketitle

\section{Introduction} \label{sec:introduction}

A sustained scientific interest in finite element methods which are intrinsically stable has delivered of a large number of new non-standard finite element methods throughout the last decade \cite{dahmen2012adaptive,cohen2012adaptivity,chan2014dual,BACUTA20152920,bacuta2017saddle,Calo2018automatic,kalchev2018mixed,muga2019discrete,los2020isogeometric,houston2020eliminating,calo2020adaptive}.
Most of these novel methods can be derived from Demkowicz and Gopalakrishnan's discontinuous Petrov--Galerkin (DPG) method with optimal test functions \cite{demkowicz2010class,demkowicz2011class}, however, many of these methods' key features can be traced at least to the first-order system least squares (FOSLS) methods introduced by Cai \emph{et al.} in \cite{cai1994first,cai1997first}.
Each of the methods mentioned above arise from a minimum residual energy principle.
Consequently, they are numerically stable and tend to have a built-in local error estimator which facilitates adaptive mesh refinement \cite{bochev2009least,demkowicz2012class,carstensen2014posteriori,Keith2017Discrete}.
It is common to refer to these non-standard finite element methods as \emph{minimum residual methods}.

A closely related class of methods derive from a very different category of energy principles which come about by minimizing the norm of all possible solutions under a variational equation constraint.
In this paper, we refer to this alternative class of methods as \emph{minimum norm methods}.
The most notable example are the $\mcL\mcL^\ast$ methods (also known as FOSLL* methods), first introduced by Cai \emph{et al.} in \cite{cai2001first}.

Minimum norm methods arise naturally when the adjoint formulation of a minimum residual method is considered \cite{Keith2018thesis}.
This may come about during the mathematical analysis of a minimum residual method \cite{demkowicz2013primal,fuhrer2017superconvergence} or when using a minimum residual method in an optimization setting; for instance, when designing error estimators based on an extrinsic quantity of interest \cite{Keith2017Goal,valseth2020goal,rojas2020residual,chakraborty2020optimal}.
Such methods may also be derived entirely on their own \cite{cai2001first,manteuffel2005first,lee2007fosll,cai2015div,lee2015fosll,brugiapaglia2015compressed,Keith2017DPGstarICESReport,brugiapaglia2018theoretical,demkowicz2018dpg,brugiapaglia2020compressive}.
In every setting, the question of accuracy is critical.

The reason for this work is to clear up an apparent confusion in the literature about the numerical accuracy of minimum norm methods.
Certain authors have pointed out that their particular minimum norm method has poor accuracy at high polynomial orders \cite{demkowicz2018dpg,valseth2020goal}.
However, given how well-established $\mcL\mcL^\ast$ methods are in the literature, other scientists have expressed an instinctive skepticism toward these assertions.
The aim of this paper is to settle the dispute by demonstrating that even $\mcL\mcL^\ast$ methods have high-order accuracy limitations in conventional test problems.

The need for this contribution is imminent as the number of new minimum norm methods appearing in the literature seems to only be accelerating.
In addition, within the part community which already believes in the accuracy limitation, there is some disagreement on its exact cause \cite{demkowicz2018dpg,valseth2020goal}.

This work does not aim to undermine certain important advantages that minimum norm methods also hold.
For instance, we do not dispute that the structure of minimum norm methods inherits numerical stability.
This means that these methods can be applied more or less ``as is'' to wide variety of problems; \emph{e.g.}, \cite{cai2001first,manteuffel2005first,lee2007fosll,cai2015div,lee2015fosll}.
They also typically lead to symmetric positive definite stiffness matrices, which obviously endears them to the most efficient linear solvers.

To make this work more accessible, we have chosen to focus our analysis on the $\mcL\mcL^\ast$ method (in its most common form, also known as the FOSLL$^\ast$ method) \cite{cai2001first,manteuffel2005first,lee2007fosll,cai2015div,lee2015fosll}, but our main results are generalizable to the entire category of minimum norm methods.
Our choice alleviates the need to introduce extensive notation which would be necessary to cover all minimum norm methods at once; \emph{e.g.}, \cite{Keith2018thesis,demkowicz2018dpg}.
It is also desirable because $\mcL\mcL^\ast$ is likely the oldest minimum norm method, the easiest to understand, and, certainly, the most well-known.
It is the author's opinion that the analysis in this setting is sufficiently direct and sufficiently generalizable, that any interested reader would be able to apply it to the setting and notation of their own method.
We now introduce the primary notation and restate the goal of this paper in a more mathematically precise way.

\subsection{Notation and intention} \label{sub:notation}

In what follows, the $\mcL\mcL^\ast$ method is a finite element method which derives from the following PDE-constrained norm minimization problem on a domain $\Omega\subset\R^n$:
\begin{equation}
\label{eq:NormMinProb}
  \min_{u\in L^2(\Omega)}
  \frac{1}{2}
  \| u\|_{L^2(\Omega)}^2
  \quad
  \text{subject to}
  \quad
    \int_\Omega u\cdot \mcL^\ast v \dd\!x  = \int_\Omega f\cdot v \dd\!x \quad \text{for all } v\in \mathrm{Dom}(\mcL^\ast).
    \end{equation}
\revised{
Here, $\mcL^\ast \colon L^2(\Omega) \supset \mathrm{Dom}(\mcL^\ast) \to L^2(\Omega)$ is a closed linear operator (with its domain $\mathrm{Dom}(\mcL^\ast)$ dense in $L^2(\Omega)$) and $f\in L^2(\Omega)$ is some prescribed function.
}

Let $u\in H^{s}(\Omega)$ and $u_{hp} \in V_{hp}$ be its order-$p$ finite element approximation on a shape-regular mesh $\mesh$.
We say that the corresponding finite element method delivers limited convergence rates if there exists a maximum exponent $r \geq 0$, \emph{independent} of $p$ and $s$, such that
the following upper bound is sharp for all sufficiently large $p$ and $s$,
\begin{equation}
\label{eq:RateLimitedDefn}
      \|u-u_{hp}\|_{L^2} \leq C h^{r}\|u\|_{H^s}\,.
  \end{equation}
Here, the constant $C$ depends at most on $p,~s,$ and $\Omega$.
Meanwhile, $\|\cdot\|_{L^2} = \|\cdot\|_{H^0}$ and $\|\cdot\|_{H^s}$, $s>0$, denote $L^2(\Omega)$ and $H^s(\Omega)$ norms, respectively.
Throughout this work, we use the same notation above for the norms of vector- and tensor-valued functions.

In this work, we show that the standard $\mcL\mcL^\ast$ finite element method, like other minimum norm methods, delivers limited convergence rates.

\subsection{Layout} \label{sub:layout}

In \Cref{sec:derivation} we give a formal derivation of the $\mcL\mcL^\ast$ method.
In \Cref{sec:model_problem} we introduce a general second-order elliptic model problem.
In \Cref{sec:a_priori_error_estimation} we perform an \emph{a priori} error estimation for the $\mcL\mcL^\ast$ method applied to the model problem.
In \Cref{sec:numerical_evidence}, we give numerical evidence to verify the theory put forward in \Cref{sec:a_priori_error_estimation}.
Finally, in \Cref{sec:discussion}, we give a short summary of our findings and various concluding remarks.

\section{Derivation and mixed formulation} \label{sec:derivation}

One may arrive at the $\mcL\mcL^\ast$ method in a variety of ways.
Most notably, one can begin with a minimum residual principle with a problem-dependent negative norm and then derive the variational equations which uniquely characterize its optimizer \cite{cai2001first,bochev2009least}.
This is the most well-established approach, however, it requires the definition of various symbols that are not required in this work.
Therefore, we opt for a short derivation which allows us to proceed immediately to an explicit example and its analysis.
The interested reader is referred to \cite{bochev2009least} and references therein for a thorough motivation of the $\mcL\mcL^\ast$ method.

\revised{
We begin with a linear boundary value problem (BVP) over a domain $\Omega\subset\R^n$:
\begin{equation}
  \mcL u = f,
\label{eq:GeneralPDE}
\end{equation}
where $\mcL$ is a linear differential operator and $f\in L^2(\Omega)$ is some prescribed function.
More specifically, we assume that $\mcL \colon L^2(\Omega) \supset \mathrm{Dom}(\mcL) \to L^2(\Omega)$ is a closed, bijective linear operator and that its domain, written $\mathrm{Dom}(\mcL)$, is dense in $L^2(\Omega)$ and equipped with the graph norm $\|u\|_{H(\mcL)}^2 =\|u\|_{L^2}^2 + \|\mcL u \|_{L^2}^2$.
If $\mcL$ is bounded below, meaning
\begin{equation}
\label{eq:BoundedBelowPrimal}
  \|v\|_{L^2} \leq C\|\mcL v\|_{L^2}
  \quad
  \text{for all }
  v \in \mathrm{Dom}(\mcL)
  ,
\end{equation}
then, by the Closed Range Theorem (for closed operators), its adjoint is also bounded below \cite[Section~4.3]{DemkowiczClosedRange},
\begin{equation}
\label{eq:BoundedBelow}
  \|v\|_{L^2} \leq C\|\mcL^\ast v\|_{L^2}
  \quad
  \text{for all }
  v \in \mathrm{Dom}(\mcL^\ast)
  .
\end{equation}
By~\cref{eq:BoundedBelowPrimal}, the \emph{strong} variational formulation,
\begin{equation+}
  \text{Find }u\in \mathrm{Dom}(\mcL)
  \text{ satisfying}
  \quad
  (\mcL u, v) = (f,v)
  \quad
  \text{for all } v\in L^2(\Omega),
\end{equation+}
is well-posed.
Likewise, by~\cref{eq:BoundedBelow}, the \emph{ultraweak} variational formulation,
\begin{equation}
  \text{Find }u\in L^2(\Omega)
  \text{ satisfying}
  \quad
  (u, \mcL^\ast v) = (f,v)
  \quad
  \text{for all } v\in \mathrm{Dom}(\mcL^\ast),
\label{eq:UltraweakForm}
\end{equation}
is also well-posed (with respect to the graph norm $\|v\|_{H(\mcL^\ast)}^2 = \|v\|_{L^2}^2 + \|\mcL^\ast v \|_{L^2}^2$) \cite{DemkowiczClosedRange}.
}

\revised{
If we define the auxiliary variable $u = \mcL^\ast \lambda$, then~\cref{eq:UltraweakForm} can be rewritten as
\begin{subequations}
\label{eq:LL*primal}
\begin{equation}
\label{eq:LL*primalformulation}
  \text{Find }\lambda\in \mathrm{Dom}(\mcL^\ast)
  \text{ satisfying}
  \quad
  (\mcL^\ast \lambda, \mcL^\ast v) = (f, v)
  \quad\text{for all }v \in \mathrm{Dom}(\mcL^\ast).
\end{equation}
This variational formulation is the foundation of the $\mcL\mcL^\ast$ finite element method.
Indeed, introducing the finite-dimensional subspace $V_{hp}\subsetneq \mathrm{Dom}(\mcL^\ast)$, the $\mcL\mcL^\ast$ solution is defined as $u_{hp} = \mcL^\ast\lambda_{hp}\in \mcL^\ast(V_{hp})$, where $\lambda_{hp}$ is the unique solution to the following discrete variational problem:
\begin{equation}
\label{eq:LL*primalmethod}
  \text{Find }\lambda_{hp}\in V_{hp}
  \text{ satisfying}
  \quad
  (\mcL^\ast \lambda_{hp}, \mcL^\ast v) = (f, v)
  \quad\text{for all }v \in V_{hp}.
\end{equation} 
\end{subequations}
}

In the coming arguments, it is helpful to also consider the mixed formulation of~\cref{eq:LL*primalformulation}:
\begin{subequations}
\label{eq:mixedmethods}
\begin{equation}
  \begin{gathered}
    \text{Find }(u,\lambda)\in L^2(\Omega)\times \mathrm{Dom}(\mcL^\ast)
    \text{ satisfying}
    \hspace{60mm}
    \\
    \hspace{50mm}
    \left\{
    \begin{alignedat}{3}
      (u,w) - (\mcL^\ast \lambda, w) &= 0
      \quad&&\text{for all }w \in L^2(\Omega),\\
      (u, \mcL^\ast v) &= (f, v)
      \quad&&\text{for all }v \in \mathrm{Dom}(\mcL^\ast).
    \end{alignedat}
    \right.
  \end{gathered}
\label{eq:LL*mixedformulation}
\end{equation}
Likewise, the related discrete formulation (cf.~\cref{eq:LL*primalmethod}) is as follows:
\begin{equation}
  \begin{gathered}
    \text{Find }(u_{hp},\lambda_{hp})\in \mcL^\ast(V_{hp})\times V_{hp}
    \text{ satisfying}
    \hspace{50mm}
    \\
    \hspace{50mm}
    \left\{
    \begin{alignedat}{3}
      (u_{hp},w) - (\mcL^\ast \lambda_{hp}, w) &= 0
      \quad&&\text{for all }w \in \mcL^\ast(V_{hp}),\phantom{L^2}\!\!\\
      (u_{hp}, \mcL^\ast v) &= (f, v)
      \quad&&\text{for all }v \in V_{hp}.
    \end{alignedat}
    \right.
  \end{gathered}
\label{eq:LL*mixedmethod}
\end{equation}
\end{subequations}
It is important to emphasize that \cref{eq:LL*mixedformulation,eq:LL*mixedmethod} are equivalent to~\cref{eq:LL*primalformulation,eq:LL*primalmethod}, respectively; they are simply written differently.
This mixed method perspective is very helpful.
For instance, from~\cref{eq:LL*mixedformulation} it is evident that $\lambda$ is the Lagrange multiplier associated to the minimization problem~\cref{eq:NormMinProb}.

\section{Model problem} \label{sec:model_problem}

In this section, we consider the second-order linear homogeneous boundary value problem (BVP)
\begin{align}
\label{eq:PDE}
  -\div(\bmA \grad u) + \bmb\cdot\grad u + c\, u
  &=
  f
  \quad
    \text{in } \Omega,
\label{eq:BC}
            \end{align}
with the boundary condition $u = 0$ on $\partial\Omega$.
Here, $\Omega\subset \R^d$, $d=2,3$, is a polyhedral domain, $\bmA^{-1} \in [L^\infty(\Omega)]^{d\times d}$ is a symmetric positive definite matrix-valued function almost everywhere, $\bmb \in [L^\infty(\Omega)]^d$ is almost everywhere divergence free, $c \in L^\infty(\Omega)$ is almost everywhere non-negative, and $f\in L^2(\Omega)$.
It is well-known that the solution of this problem, $u$, belongs to $H^1_0(\Omega)$.

We follow \cite{cai2015div} and write~\cref{eq:PDE} as the following first-order system of equations involving a new solution variable $\bsigma \in H(\div,\Omega) = \{\bsigma \in [L^2(\Omega)]^d \colon \div(\bsigma) \in L^2(\Omega)\}$:
\begin{equation}
      \begin{aligned}
      \bmA^{-1}\bsigma + \grad u
      &=
      \bm{0}
      \quad
      &&\text{in } \Omega,
      \\
      \div\bsigma - \bmb\cdot \bmA^{-1}\bsigma + c\, u
      &=
      f
      \quad
      &&\text{in } \Omega.
    \end{aligned}
  \end{equation}
\revised{
An alternative way of expressing these equations is to write $\LL \bmu = \bmf$, where the operator $\mcL\colon [L^2(\Omega)]^d\times L^2(\Omega) \supset H(\div,\Omega)\times H^1_0(\Omega)\to [L^2(\Omega)]^d\times L^2(\Omega)$ is defined
\begin{equation}
\label{eq:ModelOperator}
          \LL \bmu = ( \bmA^{-1}\bsigma + \grad u,\, \div\bsigma - \bmb\cdot \bmA^{-1}\bsigma + c\, u)
  ,
  \quad
  \bmu = (\bsigma, u)
    ,
\end{equation}
and the load $\bmf \in [L^2(\Omega)]^{d}\times L^2(\Omega)$ is defined
\begin{equation}
\label{eq:ModelLoad}
  \bmf = (\bm0,f)
  .
\end{equation}
A straightforward calculation (see, \emph{e.g.}, \cite[Section~2.3]{cai2015div}) can be performed to show that the $L^2$-adjoint of the operator~\cref{eq:ModelOperator}, $\mcL^\ast\colon [L^2(\Omega)]^d\times L^2(\Omega) \supset H(\div,\Omega)\times H^1_0(\Omega)\to [L^2(\Omega)]^d\times L^2(\Omega)$, is simply
\begin{equation}
\label{eq:ModelAdjoint}
  \LL^* \bmv  = ( \bmA^{-1}\btau - \bmA^{-1}\bmb\, v - \grad v,\, c\,v - \div \btau)
  ,
  \quad
  \bmv = (\btau, v)
    .
\end{equation}
}

\revised{
In the notation of \Cref{sec:derivation}, we see that $\mathrm{Dom}(\mcL) = \mathrm{Dom}(\mcL^\ast) = H(\div,\Omega)\times H^1_0(\Omega)$.
Accordingly, the $\mcL\mcL^\ast$ formulation of~\cref{eq:PDE} is
\begin{equation}
\label{eq:LL*Formulation}
  \begin{gathered}
  \text{Find } \blambda = (\bzeta,\lambda) \in \mathrm{Dom}(\mcL^\ast)
  \text{ satisfying}
  ~~
  ( \LL^* \blambda, \LL^* \bmv ) = ( \bmf, \bmv )
  \quad\text{for all } \bmv \in \mathrm{Dom}(\mcL^\ast)
  ,
                      \end{gathered}
\end{equation}
with the definitions~\cref{eq:ModelLoad,eq:ModelAdjoint} given above.
From the solution of this variational problem, one then reconstructs the solution $\bmu = \LL^*\blambda$.
}

\subsection{Properties of the Lagrange multiplier} \label{sub:properties_of_the_lagrange_multiplier}

It is instructive to write out the strong form of~\cref{eq:LL*Formulation} for the Laplace operator; i.e., let $\bmA = \bmI$ (the identity matrix), and let both $\bmb$ and $c$ be zero.
Doing so will lend insight to the regularity required of the Lagrange multiplier $\blambda = (\bzeta,\lambda)$ and the intricate relationship between it and $\bmu = (\bsigma,u)$ and $f$.
We therefore turn to the simplified variational equation
\begin{equation}
  (\bzeta - \grad \lambda, \btau - \grad v) + (\div \bzeta, \div \btau)
  = ( f, v )
  \quad\text{for all }(\btau,v) \in \mathrm{Dom}(\mcL^\ast)
  ,
\label{eq:PoissonVE}
\end{equation}
and the expressions $\bsigma = \bzeta - \grad\lambda$ and $u = -\div\bzeta$.

After integrating~\cref{eq:PoissonVE} by parts, one arrives at the following coupled system of second-order equations for the variables $\bzeta,~\lambda$:
\begin{subequations}
\label{eq:LLstarSystem}
\begin{align+}
\label{eq:LLstarSystemA}
  \bzeta - \grad\lambda - \grad\div\bzeta &= \bm{0}\quad \text{in } \Omega,\\
\label{eq:LLstarSystemB}
  \div\bzeta - \Delta\lambda &= f\quad \text{in } \Omega,    \end{align+}
\end{subequations}
with the boundary conditions $\lambda = \div\bzeta = 0$ on $\partial\Omega$.
The reader may now notice, after simply substituting $\div\bzeta = -u$ into~\cref{eq:LLstarSystemB}, that $\lambda$ is the unique solution of the following Dirichlet problem:
\begin{equation}
  -\Delta \lambda = f + u
  \quad 
  \text{in } \Omega,
  \quad
  \text{with }
  \lambda = 0
  \quad
  \text{on } \partial\Omega
  .
\end{equation}
The regularity of solutions to this problem have been studied extensively \cite{Grisvard,MR2597943,grisvard2011elliptic}.
One well-known result is that if $\Omega$ is convex or globally $C^2$, then $\lambda \in H^2(\Omega)\cap H^1_0(\Omega)$.
In two dimensions, this result can be improved if one invokes information about the angle of the largest corner in the domain, $\theta < \pi$.
In general, one finds there is a constant $C>0$, depending on $s$, such that
\begin{equation}
\label{eq:EllipticRegularity}
  \|\lambda\|_{H^{s+2}} \leq C \|f + u\|_{H^{s}}
  ,
  \quad
  0 \leq s < s_0
  ,
\end{equation}
where $s_0 = \min\{1,\pi/\theta - 1\}$; see, \emph{e.g.}, \cite{bacuta2003regularity2}.
The regularity of $\lambda$ deteriorates in another well-understood way in the presence of domains with re-entrant corners; see, \emph{e.g.}, \cite{bacuta2003regularity,kondrat1967boundary}.
More limited bounds also hold with non-homogeneous boundary conditions \cite{bacuta2003regularity2,bacuta2003regularity} or when $\bmA \neq \bmI$ and when $\bmb$ and $c$ are non-zero; see, \emph{e.g.}, \cite{Grisvard,grisvard2011elliptic} and \cite[Chapter~6.3]{MR2597943}.

  At this point, it is still not clear why the regularity limitation of the Lagrange multiplier component $\lambda$ should affect the convergence rate of the $\mcL\mcL^*$ method or, in fact, any minimum norm method.
  Indeed, based solely on the discussion above, it appears only that both $\lambda$ and $u$ have similar regularity properties.
  This is an important observation in its own right since various authors have stated that $\lambda$ should have higher regularity than $u$, simply because it derives from the higher-order PDE~\cref{eq:LLstarSystem}; see, \emph{e.g.}, \cite[p.~483]{bochev2009least}.
  In addition, it is known that the residual function (sometimes called the ``error representation function'' \cite{demkowicz2015encyclopedia}), a different secondary variable which arises naturally in minimum residual methods, is similarly restricted; see, \emph{e.g.}, \cite{Keith2018thesis} and references therein.
  The key ingredient that we are missing appears in the stability bound which arises from the saddle-point analysis.

\subsection{Discretization} \label{sub:discretization}

In \Cref{sec:a_priori_error_estimation}, we use~\cref{eq:EllipticRegularity} to argue that the specific form of the saddle-point problems~\cref{eq:LL*mixedformulation,eq:LL*mixedmethod} limits the convergence rates of $\mcL\mcL^\ast$ methods for certain second-order elliptic PDEs.
In addition, we explain how the same analysis may be replicated and applied to other minimum norm methods more generally.
First, however, we must introduce a specific discretization we aim to analyze.
For continuity, we choose to follow the same discretization of~\cref{eq:LL*Formulation} considered in \cite{cai2015div}.

Let $\mesh$ be a shape-regular triangulation of $\Omega$ into simplices.
For each element $K\in\mesh$, we denote the space of polynomials of degree less than or equal to $p$ on $K$ by $P_p(K)$.
Accordingly, the local Raviart--Thomas space of order $p$ on $K$ \cite{raviart1977mixed} is defined
\begin{equation}
  \mathcal{RT}_p(K)
  =
  P_p(K)^d + \bfx P_p(K)
  ,
\end{equation}
where $\bfx = (x_1,\ldots,x_d)$.

We may now define the approximation space $V_{hp}$ using a globally $H(\div)$-conforming Raviart--Thomas space and $H^1$-conforming (i.e., continuous) piecewise polynomials of degree at most $p+1$.
That is, $\bmV_{hp} = \Sigma_p(\mesh) \times V_{p+1}(\mesh)$, where
\begin{align}
  \Sigma_p(\mesh) &= \{\btau \in H(\div,\Omega) \colon \btau|_K \in \mathcal{RT}_p(K) ~ \forall K\in \mesh\},
  \\
  V_{p+1}(\mesh) &= \{v \in H^1_0(\Omega) \colon v|_K \in P_{p+1}(K) ~ \forall K\in \mesh\}.
\end{align}
With these definitions in hand, we arrive at the following discrete variational formulation:
\begin{equation}
\label{eq:LL*primalmethodElliptic}
  \text{Find }\blambda_{hp} = (\bzeta_{hp},\lambda_{hp})\in \bmV_{hp}
  \text{ satisfying}
  ~~
  (\mcL^\ast \blambda_{hp}, \mcL^\ast \bmv) = (\bmf, \bmv)
  \quad\text{for all }\bmv \in \bmV_{hp}.
\end{equation}
After solving this discrete variational problem, one may reconstruct the discrete solution $\bmu_{hp} = (\bsigma_{hp}, u_{hp}) = \mcL^\ast\blambda_{hp}$.

Certain approximation properties of the spaces $\Sigma_p(\mesh)$ and $V_{p+1}(\mesh)$ will be important in the next section.
Let $\Pi_{\grad}^{p+1} \colon H^1_0(\Omega) \to V_{p+1}(\mesh)$ denote the Scott--Zhang projection operator \cite{scott1990finite}, which has the property
\begin{equation}
\label{eq:SZprojector}
  \| u - \Pi_{\grad}^{p+1} u\|_{H^1}
  \leq
  Ch^{r} \| u \|_{H^{r+1}}
  ,
  \quad
    0 \leq r \leq p+1
  .
\end{equation}
Moreover, let $\Pi_{\div}^{p} \colon H(\div,\Omega) \cap H^1(\Omega) \to \Sigma_p(\mesh)$ denote the Raviart--Thomas operator, which satisfies \cite[Proposition~2.5.4]{Boffi2013}
\begin{subequations}
\begin{gather}
\label{eq:RTprojectorL2}
  \| \bsigma - \Pi_{\div}^{p} \bsigma\|_{L^2}
  \leq
  Ch^{r} | \bsigma |_{H^{r}}
  ,
  \quad
    1 \leq r \leq p+1
  ,
  \\
\label{eq:RTprojector}
  \| \div(\bsigma - \Pi_{\div}^{p} \bsigma) \|_{L^2}
  \leq
  Ch^{r} | \div \bsigma |_{H^{r}}
      ,
  \quad
    1 \leq r \leq p+1
  .
\end{gather}
\end{subequations}

\section{A priori error estimation} \label{sec:a_priori_error_estimation}

If the variational problems~\cref{eq:LL*primalformulation,eq:LL*primalmethod} are well-posed, then so are~\cref{eq:LL*mixedformulation,eq:LL*mixedmethod}.
Therefore, in the analysis of any well-posed $\mcL\mcL^\ast$ method, we may invoke the following a priori error estimate from the standard theory of mixed methods \cite{Boffi2013}:
\begin{theorem}
  \label{thm:apriori}
    There is a constant $C$ such that the complete $\mcL\mcL^\ast$ solution $(u_{hp},\lambda_{hp})$ satisfies the error
      estimate
  \[
    \| u - u_{hp} \|_{L^2} + \| \lambda - \lambda_{hp} \|_{H(\mcL^\ast)}
    \le 
    C 
    \bigg[
    \inf_{\mu \in \mcL^\ast(V_{hp})} 
    \| u - \mu \|_{L^2} +
    \inf_{\nu \in V_{hp}} \| \lambda - \nu \|_{H(\mcL^\ast)}
    \bigg].
  \]  
\end{theorem}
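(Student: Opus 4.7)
The plan is to realize Theorem~\ref{thm:apriori} as a direct application of the classical quasi-optimality theorem for saddle-point problems (Brezzi's theorem) to the mixed formulation--discretization pair \cref{eq:LL*mixedformulation,eq:LL*mixedmethod}. Concretely, I would set the solution spaces to $U = L^2(\Omega)$ equipped with $\|\cdot\|_{L^2}$ and $M = \mathrm{Dom}(\mcL^\ast)$ equipped with the graph norm $\|\cdot\|_{H(\mcL^\ast)}$, and define the bilinear forms $a\colon U \times U \to \R$ and $b\colon M \times U \to \R$ by $a(u,w) = (u,w)$ and $b(v,w) = -(\mcL^\ast v, w)$. Continuity of both forms with respect to these norms is an immediate consequence of the Cauchy--Schwarz inequality, using $\|\mcL^\ast v\|_{L^2} \leq \|v\|_{H(\mcL^\ast)}$ for the $b$ estimate.

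Two Brezzi conditions then need to be verified uniformly in $h$ and $p$ on the discrete pair $\mcL^\ast(V_{hp}) \times V_{hp}$. The ellipticity of $a$ on the discrete kernel of $b$ is trivial, since $a(u,u) = \|u\|_{L^2}^2$ is already coercive on all of $U$ with constant one. The discrete inf--sup condition for $b$ reads: there exists $\beta>0$ such that
\[
\sup_{w \in \mcL^\ast(V_{hp})\setminus\{0\}} \frac{|(\mcL^\ast v, w)|}{\|w\|_{L^2}} \geq \beta\,\|v\|_{H(\mcL^\ast)}
\quad \text{for all } v \in V_{hp}.
\]
The critical observation is that the discrete space for $u$ was \emph{defined} as $\mcL^\ast(V_{hp})$, so the choice $w = \mcL^\ast v$ is always admissible; it immediately yields that the supremum is bounded below by $\|\mcL^\ast v\|_{L^2}$. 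Combining with the closed-range estimate \cref{eq:BoundedBelow}, $\|v\|_{L^2} \leq C\|\mcL^\ast v\|_{L^2}$, gives $\|v\|_{H(\mcL^\ast)} \leq \sqrt{1+C^2}\,\|\mcL^\ast v\|_{L^2}$, establishing the inf--sup condition with the $h,p$-independent constant $\beta = (1+C^2)^{-1/2}$.

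Once both conditions are in place, the standard quasi-optimality estimate of Brezzi (see, \emph{e.g.}, Boffi--Brezzi--Fortin~\cite{Boffi2013}) applied to the mixed problem \cref{eq:LL*mixedformulation}--\cref{eq:LL*mixedmethod} delivers exactly the bound stated in Theorem~\ref{thm:apriori}, with a constant depending only on the continuity constants, $\alpha=1$, and $\beta$. I don't anticipate a genuine obstacle: the only feature that might initially appear problematic is that $\mcL^\ast(V_{hp})$ is not a standard finite element space but rather the image of $V_{hp}$ under an unbounded operator, which would normally complicate verifying a uniform inf--sup. Here, however, that same construction is precisely what makes the test function $w = \mcL^\ast v$ admissible and renders the condition automatic. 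The substantive content of the result is therefore not its derivation but its subsequent interpretation: the right-hand infimum $\inf_{\nu \in V_{hp}} \|\lambda - \nu\|_{H(\mcL^\ast)}$ involves the extraneous Lagrange multiplier, whose limited regularity is what will ultimately throttle the $h$-convergence rate analysed in the sections that follow.
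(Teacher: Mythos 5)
Your proposal is correct and follows essentially the same route as the paper, which simply invokes the standard quasi-optimality theory for mixed methods from \cite{Boffi2013} applied to the pair \cref{eq:LL*mixedformulation,eq:LL*mixedmethod}; you have merely made explicit the verification of the Brezzi conditions (continuity, coercivity of $(\cdot,\cdot)$ on all of $L^2(\Omega)$, and the discrete inf--sup via the admissible choice $w=\mcL^\ast v$ combined with \cref{eq:BoundedBelow}) that the paper leaves implicit. The details you supply are sound, and your closing observation about the role of the multiplier term matches the paper's subsequent analysis.
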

From that point on, a Bramble--Hilbert argument will deliver the following inequality:
  \begin{equation}
  \label{eq:LLapriori}
    \| u - u_{hp} \|_{L^2} + \| \lambda - \lambda_{hp} \|_{H(\mcL^\ast)}
    \le 
    C 
    h^{r_0} \big(\|u\|_{H^{r_1}} + \|\lambda\|_{H^{r_2}}\big)
    ,
  \end{equation}
where the constants $r_0,r_1,r_2>0$ depend on the functional setting and discretization used.
A similar sequence of arguments could be made for any minimum norm method.
For example, one may refer to \cite[Section~3.2]{demkowicz2018dpg} which presents exactly this form of result for an ultraweak DPG* method.

As our focus is on the $\mcL\mcL^\ast$ method, a number of corollaries to~\cref{thm:apriori} are in order.
Here, and from now on, we write $A \eqsim B$ if there exist constants $D_1, D_2 > 0$, independent of both the mesh and the polynomial order, such that $A \leq D_1 B$ and $A \geq D_2 B$.
First, since $u = \mcL^\ast\lambda$ and $u_{hp} = \mcL^\ast\lambda_{hp}$, it follows that
\begin{equation}
\label{eq:Equivalence}
  \|u-u_{hp}\|_{L^2}
  =
  \|\mcL^\ast\lambda - \mcL^\ast\lambda_{hp}\|_{L^2}
  \eqsim
  \|\lambda - \lambda_{hp}\|_{H(\mcL^\ast)}
  ,
\end{equation}
with the latter equivalence following from the boundedness below of $\mcL^\ast$; cf.~\cref{eq:BoundedBelow}.
Thus, we see that~\cref{thm:apriori} is equivalent to
\begin{equation}
\label{eq:Corollary}
  \|u-u_{hp}\|_{L^2}
  \eqsim
  \|\lambda - \lambda_{hp}\|_{H(\mcL^\ast)}
  \leq
  C \inf_{\nu \in V_{hp}}
  \| \lambda - \nu \|_{H(\mcL^\ast)}
  .
\end{equation}
At this point, it is clear that the convergence rate of the $\mcL\mcL^\ast$ method depends solely on the regularity of $\lambda$.
Exploiting~\cref{eq:Equivalence} differently, one may also write
\begin{equation}
  \|u-u_{hp}\|_{L^2}
  \leq
  C
  \inf_{\mu \in \mcL^\ast(V_{hp})} 
  \| u - \mu \|_{L^2}
  .
\end{equation}
Through~\cref{eq:Equivalence}, this inequality is equivalent to~\cref{eq:Corollary}.
Thus, we have the simple interpretation that the discrete space $\mcL^\ast(V_{hp})$ is typically just a poor choice for approximating $u$.

For completeness, we prove a version of~\cref{eq:LLapriori} which holds for the specific $\mcL\mcL^\ast$ method introduced in \Cref{sec:model_problem}.
Our result is related to \cite[Theorem~4.1]{cai2015div}, which also deals with the same $\mcL\mcL^\ast$ method, but involves an additional regularity assumption on the Lagrange multiplier variables.
After the proof, we explain why our result shows that $\mcL\mcL^\ast$ methods are rate limited.
We also explain why this convergence behavior is to be expected from any standard minimum norm method.

\begin{proposition}
\label{prop:LLstarCorollary}
      There is a constant $C$ such that the complete $\mcL\mcL^\ast$ solution $(\bmu_{hp},\blambda_{hp})$ satisfies the error estimate
                    \begin{equation}
  \label{eq:LLstarThm}
    \begin{gathered}
      \| \bsigma - \bsigma_{hp}\|_{L^2} + \| u - u_{hp} \|_{L^2} + \| \bzeta -\bzeta_{hp} \|_{H(\div)} + \| \lambda - \lambda_{hp} \|_{H^1}
            \hspace{25mm}
      \\
      \hspace{60mm}
      \le 
      C 
            h^r \big(|\bzeta|_{H^r} + |\div\bzeta|_{H^r} + \|\lambda\|_{H^{r+1}}\big)
                  \,,
    \end{gathered}
  \end{equation}
  for all $1 \leq r \leq p+1$.
\end{proposition}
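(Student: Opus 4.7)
The plan is to specialize Theorem~\ref{thm:apriori} (together with its consequence~\cref{eq:Corollary}) to the functional setting of the model problem of \Cref{sec:model_problem}, and then bound the best-approximation errors by projecting onto Raviart--Thomas and Scott--Zhang interpolants. First I would note that for the operator~\cref{eq:ModelAdjoint}, since the coefficients $\bmA^{-1}$, $\bmb$, and $c$ are all in $L^\infty(\Omega)$, the graph norm is equivalent to the product Sobolev norm:
\begin{equation*}
  \|\bmv\|_{H(\mcL^\ast)}^2 = \|\bmv\|_{L^2}^2 + \|\mcL^\ast \bmv\|_{L^2}^2
  \eqsim
  \|\btau\|_{H(\div)}^2 + \|v\|_{H^1}^2,
  \qquad \bmv = (\btau,v)\in \mathrm{Dom}(\mcL^\ast).
\end{equation*}
This equivalence, combined with~\cref{eq:Corollary} and the product structure $\bmV_{hp} = \Sigma_p(\mesh)\times V_{p+1}(\mesh)$, reduces the theorem to bounding the two decoupled best-approximation errors $\inf_{\btau\in \Sigma_p(\mesh)}\|\bzeta-\btau\|_{H(\div)}$ and $\inf_{v\in V_{p+1}(\mesh)}\|\lambda - v\|_{H^1}$.

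Next I would choose $\btau = \Pi_{\div}^{p}\bzeta$ and $v=\Pi_\grad^{p+1}\lambda$, and apply the interpolation bounds~\cref{eq:SZprojector,eq:RTprojectorL2,eq:RTprojector}:
\begin{equation*}
  \|\lambda-\Pi_\grad^{p+1}\lambda\|_{H^1} \leq Ch^r\|\lambda\|_{H^{r+1}},
  \qquad
  \|\bzeta-\Pi_{\div}^{p}\bzeta\|_{H(\div)} \leq Ch^r\bigl(|\bzeta|_{H^r}+|\div\bzeta|_{H^r}\bigr),
\end{equation*}
valid for $1\leq r\leq p+1$. Summing and invoking~\cref{eq:Corollary} yields the stated bound on $\|\bzeta-\bzeta_{hp}\|_{H(\div)}+\|\lambda-\lambda_{hp}\|_{H^1}$.

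To complete the estimate I would then translate this control of the Lagrange multiplier back to control on the primary variable $\bmu=(\bsigma,u)=\mcL^\ast\blambda$. Since $\bmu_{hp} = \mcL^\ast \blambda_{hp}$ and $\mcL^\ast$ is bounded on $\mathrm{Dom}(\mcL^\ast)$ in the graph norm (equivalently, in $H(\div)\times H^1$ under the equivalence above), we have
\begin{equation*}
  \|\bsigma-\bsigma_{hp}\|_{L^2} + \|u-u_{hp}\|_{L^2}
  = \|\mcL^\ast(\blambda-\blambda_{hp})\|_{L^2}
  \leq C\bigl(\|\bzeta-\bzeta_{hp}\|_{H(\div)}+\|\lambda-\lambda_{hp}\|_{H^1}\bigr),
\end{equation*}
and combining with the bound from the previous step gives~\cref{eq:LLstarThm}.

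The proof is essentially a standard mixed-methods argument, so no step should present a serious obstacle; the main care point is verifying that the discrete problem~\cref{eq:LL*primalmethodElliptic} inherits the saddle-point framework required by \Cref{thm:apriori} with constants independent of $h$ and $p$. This in turn rests on the boundedness-below property~\cref{eq:BoundedBelow} of $\mcL^\ast$, which for the operator~\cref{eq:ModelAdjoint} follows from the well-posedness of the primal BVP~\cref{eq:PDE} via the Closed Range Theorem as invoked in \Cref{sec:derivation}. Once this is in place, the rest of the argument reduces to the componentwise interpolation estimates already recorded in~\cref{eq:SZprojector,eq:RTprojectorL2,eq:RTprojector}.
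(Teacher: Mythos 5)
Your proposal is correct and follows essentially the same route as the paper's proof: establish the equivalence $\|(\bnu,\nu)\|_{H(\mcL^\ast)} \eqsim \|(\bnu,\nu)\|_{H(\div)\times H^1}$, invoke the quasi-optimality estimate~\cref{eq:Corollary}, exploit the product structure of $\bmV_{hp}$ to decouple the best-approximation errors, and bound them with the Raviart--Thomas and Scott--Zhang interpolants via~\cref{eq:SZprojector,eq:RTprojectorL2,eq:RTprojector}. The only cosmetic difference is that the paper obtains continuity and boundedness below of $\mcL^\ast$ by citing \cite[Theorem~2.2]{cai2015div} rather than re-deriving it from the Closed Range Theorem, and it absorbs your final step (controlling $\|\bsigma-\bsigma_{hp}\|_{L^2}+\|u-u_{hp}\|_{L^2}$) directly into the left-hand side of~\cref{eq:Corollary}.
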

\begin{proof}
  It follows from \cite[Theorem~2.2]{cai2015div} that $\mcL^\ast\colon H(\div,\Omega)\times H^1(\Omega)\to [L^2(\Omega)]^d\times L^2(\Omega)$, defined in~\cref{eq:ModelAdjoint}, is continuous and bounded from below~\cref{eq:BoundedBelow}.
  Therefore, we begin with~\cref{eq:Corollary}, rewritten as follows:
  \begin{equation}
    \begin{gathered}
      \| (\bsigma - \bsigma_{hp},u - u_{hp}) \|_{L^2} + \| (\bzeta -\bzeta_{hp},\lambda - \lambda_{hp}) \|_{H(\mcL^\ast)}
                        \le 
      C 
            \inf_{(\bnu,\nu) \in \bmV_{hp}}
      \| ( \bzeta - \bnu, \lambda - \nu )\|_{H(\mcL^\ast)}
      .
    \end{gathered}
  \end{equation}
  A straightforward computation shows that $\| (\bnu,\nu) \|_{H(\mcL^\ast)} \eqsim \| (\bnu,\nu)\|_{H(\div)\times H^1}$ for all $(\bnu,\nu) \in H(\mcL^\ast)$.
  Hence, by~\cref{eq:SZprojector,eq:RTprojectorL2,eq:RTprojector}, we have
  \begin{align}
    \inf_{(\bnu,\nu) \in \bmV_{hp}}
    \| ( \bzeta - \bnu, \lambda - \nu )\|_{H(\mcL^\ast)}^2
    &\eqsim
    \inf_{(\bnu,\nu) \in \bmV_{hp}}
    \| ( \bzeta - \bnu, \lambda - \nu )\|_{H(\div)\times H^1}^2
    \\
    &=
    \inf_{\bnu \in \Sigma_{p}(\mesh)} \| \bzeta - \bnu\|_{H(\div)}^2
    +
    \inf_{\nu \in V_{p+1}(\mesh)} \| \lambda - \nu\|_{H^1}^2
    \\
    &\leq
    \| \bzeta - \Pi^{p}_{\div}\bzeta \|_{H(\div)}^2
    +
    \| \lambda - \Pi^{p+1}_{\grad}\lambda \|_{H^1}^2
    \\
    &\leq
    Ch^{2r}\big(| \bzeta |_{H^{r}}^2 + | \div\bzeta |_{H^{r}}^2 + \| \lambda \|_{H^{r+1}}^2\big)
    ,
  \end{align}
  which completes the proof.
\end{proof}

\Cref{prop:LLstarCorollary} shows that the convergence rate of the $\mcL\mcL^\ast$ method is dependent on the regularity of the Lagrange multiplier $\blambda = (\bzeta,\lambda)$.
For $\bmA \in [W^{r,\infty}(\Omega)]^{d\times d}$, it is straightforward to show that
\begin{equation}
\label{eq:PrimalVariableBound}
  \|(\bsigma,u)\|_{H^{r}} \leq C \|u\|_{H^{r+1}}
  ,
\end{equation}
for all $r \geq 0$.
Thus, the regularity of the solution variable $\bsigma$ is controlled by the regularity of $u$.
This inequality naturally begs the question whether the regularity of the Lagrange multiplier $\blambda$ can also be controlled by the regularity of $u$.
Such assumptions can be found, for example, in \cite{cai2001first,cai2015div}.
For additional motivation, if one assumes that there is a mesh-independent constant $C>0$ such that
\begin{equation}
\label{eq:StrongRegularity}
  |\bzeta|_{H^r} + |\div\bzeta|_{H^r} + \|\lambda\|_{H^{r+1}}
    \leq
  C \|(\bsigma,u)\|_{H^{r}}
      ,
\end{equation}
for all $r \geq 0$ (cf. \cite[Theorem~4.1]{cai2015div}), then the convergence rate determined by~\cref{eq:LLstarThm} would indeed depend only on the regularity of $u$.

Unfortunately, as was shown in \Cref{sub:properties_of_the_lagrange_multiplier}, the regularity of the Lagrange multiplier \emph{cannot} be determined solely based on the regularity of the solution.
In fact, the regularity of $\lambda$ is \emph{independent} of the regularity of $u$.
For instance, $u$ may be infinitely smooth, but $\lambda \in H^{s+2}(\Omega)$, only up to some finite $s<s_0$.
The next section is devoted to two examples which demonstrate this; cf. \Cref{sub:square_domain_dirichlet}.
In the first example, both $u$ and $\lambda$ are infinitely smooth, meanwhile, in second example, $u$ is a polynomial but the Lagrange multiplier $\lambda\notin H^{s}(\Omega)$, for any $s\geq 3$.
In the second example, we get only the best $h$-uniform convergence rate predicted by the following corollary of~\Cref{prop:LLstarCorollary}; namely, first-order convergence when $p=0$, but only second-order convergence for all $p\geq 1$.

\begin{corollary}
\label{cor:RateLimited}
  The $\mcL\mcL^\ast$ method for the Poisson equation on a convex polygonal domain $\Omega\subset\R^2$, delivers limited convergence rates.
  In general, it holds that
  \begin{equation}
    \| u - u_{hp} \|_{L^2}
    +
    \| \bsigma - \bsigma_{hp} \|_{L^2}
    \leq
    C 
    h^{\min\{r,p+1,s_0+1-\epsilon\}} \|u\|_{H^{r+1}}
    ,
  \end{equation}
  for all $r \geq 1$ and $\epsilon>0$, where $s_0 = \min\{1,\pi/\theta - 1\}$ and $\theta < \pi$ is the angle of the largest corner in the domain $\Omega$.
\end{corollary}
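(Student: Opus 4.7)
The plan is to reduce the corollary to a direct application of \Cref{prop:LLstarCorollary} by carefully identifying all three quantities on its right-hand side in terms of the primal unknown $u$ alone and the regularity of the Lagrange multiplier $\lambda$ provided by \cref{eq:EllipticRegularity}. Concretely, for the Poisson problem with $\bmA=\bmI$, $\bmb=\bm0$, $c=0$, one has $\bsigma=-\grad u$ and, from~\cref{eq:LLstarSystem}, the identifications $\bzeta = \grad\lambda - \grad u$ and $\div\bzeta = -u$. Consequently,
\begin{equation*}
  |\bzeta|_{H^{r'}} + |\div\bzeta|_{H^{r'}}
  \le
  C\bigl(\|u\|_{H^{r'+1}} + \|\lambda\|_{H^{r'+1}}\bigr),
\end{equation*}
so the right-hand side of \cref{eq:LLstarThm} is controlled by $\|u\|_{H^{r'+1}}+\|\lambda\|_{H^{r'+1}}$ for any $1\le r'\le p+1$.

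Next I would bound $\|\lambda\|_{H^{r'+1}}$. From \Cref{sub:properties_of_the_lagrange_multiplier}, $\lambda$ solves the Dirichlet problem $-\Delta\lambda = f + u$ with $\lambda=0$ on $\partial\Omega$, and \cref{eq:EllipticRegularity} gives $\|\lambda\|_{H^{s+2}}\le C\|f+u\|_{H^s}$ on the convex polygon $\Omega$ for all $0\le s<s_0$. Since $f=-\Delta u$, the right-hand side is controlled by $\|u\|_{H^{s+2}}$. Setting $s = r'-1$, one obtains
\begin{equation*}
  \|\lambda\|_{H^{r'+1}} \le C\|u\|_{H^{r'+1}},
  \qquad
  1 \le r' < s_0+1.
\end{equation*}
This is where the regularity ceiling enters: the constant degenerates as $r'\uparrow s_0+1$, forcing the $-\epsilon$ in the final exponent.

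The final step is to choose $r'$ optimally. Given the target regularity index $r\ge 1$, I would set $r' = \min\{r,\,p+1,\,s_0+1-\epsilon\}$, which is admissible for every fixed $\epsilon>0$, apply \Cref{prop:LLstarCorollary} with this $r'$, and use the combined bound above together with the monotonicity $\|u\|_{H^{r'+1}}\le\|u\|_{H^{r+1}}$ (for $r'\le r$) to conclude
\begin{equation*}
  \|u-u_{hp}\|_{L^2} + \|\bsigma-\bsigma_{hp}\|_{L^2}
  \le C\,h^{\min\{r,p+1,s_0+1-\epsilon\}}\|u\|_{H^{r+1}}.
\end{equation*}

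I expect the only delicate point to be the passage at the threshold $r'=s_0+1$: the Grisvard-type estimate \cref{eq:EllipticRegularity} is stated only for $s<s_0$, so one cannot take $\epsilon=0$, and the constant $C$ in the final estimate blows up as $\epsilon\to 0$. Everything else is bookkeeping: the bounds on $\bzeta$ and $\div\bzeta$ are immediate from the strong-form relations in~\cref{eq:LLstarSystem}, and the selection of $r'$ is a straightforward min-argument that distributes the three possible saturation sources (data regularity, polynomial degree, and corner-induced ceiling on $\lambda$) into the exponent of $h$.
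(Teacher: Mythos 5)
Your proposal is correct and follows essentially the same route as the paper's proof: the identities $\div\bzeta=-u$ and $\bzeta=\grad\lambda-\grad u$ reduce the right-hand side of \Cref{prop:LLstarCorollary} to $\|u\|_{H^{r+1}}+\|\lambda\|_{H^{r+1}}$, and the elliptic regularity bound \cref{eq:EllipticRegularity} with $f=-\Delta u$ converts the $\lambda$-term into $\|u\|$-norms up to the threshold $s<s_0$. Your explicit selection of $r'=\min\{r,p+1,s_0+1-\epsilon\}$ and the remark about the constant degenerating as $\epsilon\to 0$ merely spell out what the paper leaves implicit in ``combining both these bounds.''
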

\begin{proof}
  Recall the identities $-\div \bzeta = u$ and $\bzeta = \grad\lambda - \grad u$ from \Cref{sub:properties_of_the_lagrange_multiplier}.
  Thus, $\| \bzeta \|_{H^{r}} + \| \div\bzeta \|_{H^{r}} \leq C ( \| \lambda \|_{H^{r+1}} + \| u \|_{H^{r+1}} )$ and, by~\cref{eq:LLstarThm},
  \begin{equation}
    \| u - u_{hp} \|_{L^2}
    +
    \| \bsigma - \bsigma_{hp} \|_{L^2}
    \leq
    C 
    h^r \big(\|u\|_{H^{r+1}} + \|\lambda\|_{H^{r+1}}\big)
    ,
  \end{equation}
  for all $1 \leq r < p+1$.
  Next, recall~\cref{eq:EllipticRegularity}, which implies that
  \begin{equation}
    \|\lambda\|_{H^{s+2}}
    \leq
    C \|\Delta u\|_{H^{s}} + \|u\|_{H^{s}} 
    \leq
    C \|u\|_{H^{s+2}} 
    ,
  \end{equation}
  for all $0 \leq s < s_0$.
  Combining both these bounds, and the assumptions which deliver them, we arrive at the required identity,
  \begin{align}
      \| u - u_{hp} \|_{L^2}
      +
      \| \bsigma - \bsigma_{hp} \|_{L^2}
      &\leq
      C 
      h^{\min\{r,p+1,s_0+1-\epsilon\}} \|u\|_{H^{r+1}}
      ,
  \end{align}
  even when $u\in C^\infty(\overline{\Omega})$.
\end{proof}

\begin{remark}
For other minimum norm methods, the upshot of~\Cref{cor:RateLimited} is similar \cite{demkowicz2018dpg,Keith2018thesis}.
Indeed, due to the unique characteristics of minimum norm methods, one will eventually uncover an inequality like~\cref{eq:LLapriori} and, generally, the regularity of $\lambda$ will not be controlled entirely by the regularity of $u$.
Thus, even if $u$ is infinitely smooth, one will not be able to guarantee arbitrary convergence rates.
\end{remark}

\section{Numerical examples} \label{sec:numerical_evidence}

In this section, we numerically verify that the $\mcL\mcL^\ast$ method delivers limited uniform $h$-convergence rates, as defined in~\Cref{sub:notation}.
To this end, we note that it is sufficient that we make the same simplifications as in \Cref{sub:properties_of_the_lagrange_multiplier}.
To conduct our experiments, we used the finite element software FEniCS \cite{alnaes2015fenics}.

\subsection{Set-up} \label{sub:set_up}
We again consider the model problem~\cref{eq:PDE}, but this time with the possibility of a non-homogeneous Dirichlet boundary condition.
To this end, let $u_0 \in H^{\onehalf}(\bdry\Omega)$ \revised{and $f\in L^2(\Omega)$}.
After making the simplifying assumptions $\bmA = \bmI$, $\bmb = \bm{0}$, and $c = 0$, we arrive at the elliptic BVP
\begin{equation}
  -\Delta u = f
  \quad
  \text{in } \Omega,
  \quad
  \text{with }
  u = u_0
  \quad
  \text{on } \partial\Omega.
                  \label{eq:ModelProblem}
\end{equation}
In this setting (with the non-homogeneous boundary condition $u|_{\partial\Omega} = u_0$), a straightforward computation still shows that
\begin{equation}
  -\Delta \lambda = f+u
  \quad
  \text{in } \Omega,
  \quad
  \text{with }
  \lambda = 0
  \quad
  \text{on } \partial\Omega.
\label{eq:LambdaPoissonInhomogeneous}
\end{equation}

From now on, we set $\Omega = [0,1]^2$ and define the triangular mesh $\mesh$, $h = 1.0, 0.5, 0.25,\ldots$, by first uniformly subdividing $[0,1]^2$ into $4^{n}$ geometrically conforming squares and then subdividing each new square into two right-angled triangles whose hypotenuses connect the south-west and the north-east vertex of the bounding square.
The $n=2$ mesh ($h=0.25$) is depicted in \Cref{fig:mesh}.
In these experiments, we solve~\cref{eq:LL*primalmethodElliptic} using the precisely the same discretization described in \Cref{sub:discretization}.

\begin{figure}
  \begin{tikzpicture} [dot/.style={draw,rectangle,minimum size=4mm,inner sep=0pt,outer sep=0pt,thick}]
  \draw [thin] (0,0) grid (4,4);
  \draw [thin] (0,3) -- (1,4);
  \draw [thin] (0,2) -- (2,4);
  \draw [thin] (0,1) -- (3,4);
  \draw [thin] (0,0) -- (4,4);
  \draw [thin] (1,0) -- (4,3);
  \draw [thin] (2,0) -- (4,2);
  \draw [thin] (3,0) -- (4,1);
\end{tikzpicture}
\caption{\label{fig:mesh}Mesh $\mesh$ with $h=0.25$.}
\end{figure}

Note that \cref{eq:EllipticRegularity} tells us that we can only guarantee that $u,~\lambda \in H^{2+s}(\Omega)$ for $0<s<1$ because each internal angle in $\Omega = [0,1]^2$ is $\pi/2$.
Therefore, by~\Cref{prop:LLstarCorollary,cor:RateLimited}, we can guarantee little more than
\begin{equation}
  \| (\bsigma - \bsigma_{hp},u - u_{hp}) \|_{L^2}
  \leq
  C 
  h^r \big(\|(\bsigma,u)\|_{H^{r}} + \|(\bzeta,\lambda)\|_{H^{r+1}}\big)
  ,
\end{equation}
for $1\leq r < \min\{p+1,2\}$.
Better rates can be uncovered if $u$ is specially designed to give $\lambda$ high regularity.
However, high regularity in $u$ does not imply high regularity in $\lambda$.

\subsection{Results} \label{sub:square_domain_dirichlet}

We consider two seemingly innocuous cases for the loads and boundary conditions: (i) $f(x,y)=2\pi^2\sin(\pi x)\sin(\pi y)$ and $u_0 = 0$; and (ii) $f(x,y)=0$ and  $u_0 = 1$.
In both cases, the exact solution is infinitely smooth.
Indeed, in case (i), $u(x,y)=\sin(\pi x)\sin(\pi y)$ and, in case (ii), $u(x,y)=1$.
\revised{Note that $u\in C^\infty(\overline{\Omega})$ in both cases.}

In the first case (i), a straightforward computation shows that $\lambda$ is a constant scalar multiple of $\sin(\pi x)\sin(\pi y)$ and so $\lambda \in C^\infty(\overline{\Omega})$ is infinitely smooth.
Therefore, by~\cref{prop:LLstarCorollary}, the convergence rate of the $\mcL\mcL^\ast$ method under uniform $h$-refinement will be limited only by polynomial order of the finite element discretization $p$.
This fact is clearly witnessed in~\cref{fig:hUnifA}.

\begin{figure}
  \centering
  \begin{subfigure}[b]{0.65\textwidth}
    \centering
    \includegraphics[width=\textwidth]{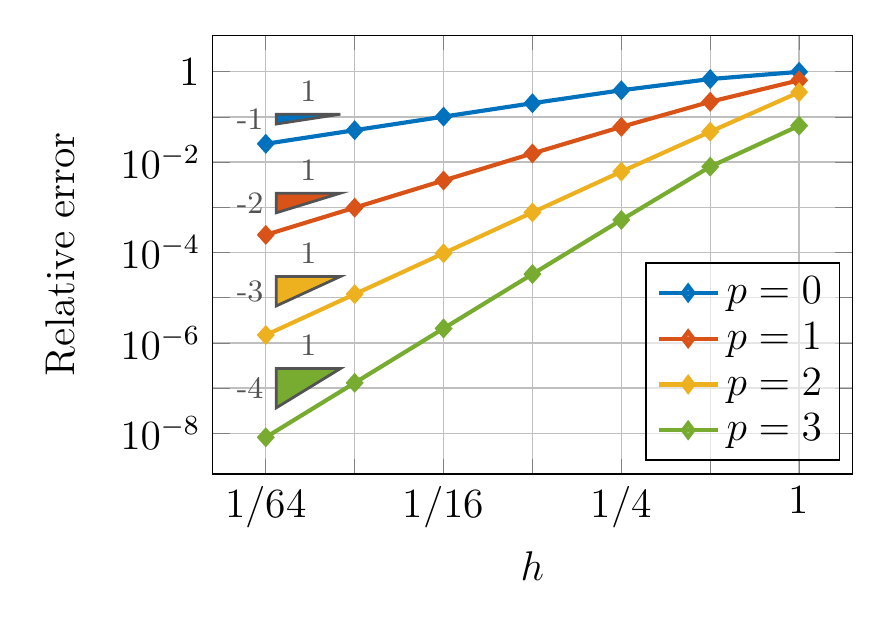}
        \caption{Standard rates; $u(x,y) = \sin(\pi x)\sin(\pi y)$.}
  \label{fig:hUnifA}
  \end{subfigure}  \\[5pt]
  \begin{subfigure}[b]{0.65\textwidth}
    \centering
    \includegraphics[width=\textwidth]{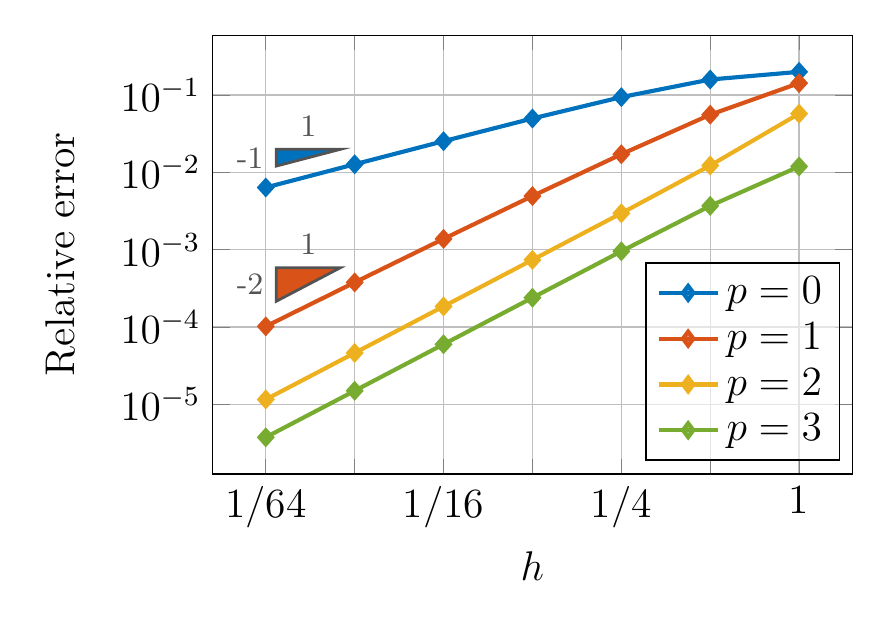}
        \caption{Sub-standard rates; $u(x,y) = 1$.}
  \label{fig:hUnifB}
  \end{subfigure}
  \caption{Different convergence rates for two different $C^\infty(\overline{\Omega})$ exact solutions $u$.
  In~\subref{fig:hUnifA}, $\lambda \in C^\infty(\overline{\Omega})$, however, in~\subref{fig:hUnifB}, $\lambda \in H^{2+s}(\Omega)$ for all $0<s<1$.
  }
    \label{fig:hUnifLLstar}
\end{figure}

In the second case (ii), one sees from~\cref{eq:LambdaPoissonInhomogeneous} that $\lambda$ solves
\begin{equation}
  -\Delta \lambda = 1
  \quad
  \text{in } \Omega,
  \quad
  \text{with }
  \lambda = 0
  \quad
  \text{on } \partial\Omega.
\end{equation}
This problem does not have a simple exact solution to write out, however, we know that $\lambda \in H^{2+s}(\Omega)$ for $0<s<1$.
In turn, in~\cref{fig:hUnifB}, we see at best second-order convergence of the exact solution $(\bsigma,u)$ even though $u\in C^\infty(\overline{\Omega})$.

\begin{remark}
\Cref{fig:hUnifLLstar} reinforces both~\Cref{prop:LLstarCorollary,cor:RateLimited} to demonstrate the $\mcL\mcL^\ast$ method is rate limited.
Even though we considered only the special case $\bmA = \bmI$, $\bmb = (0,0)$, and $c = 0$, similar conclusions also hold for more complicated coefficients.
In \cite[Section~5]{demkowicz2018dpg}, analogous experiments were done with the DPG* method and with an $\mcL\mcL^\ast$ with a tensor product discretization.
Identical conclusions were found in both of those settings as well.
\end{remark}

\begin{remark}
  It is important to note that the rate-limited behavior of the solution only influences the accuracy of high-order discretizations.
    Because most early experiments with minimum norm methods involved only the lowest order ($p=0$) setting, this may help to explain the confusion on this topic in the literature.
\end{remark}

\begin{remark}
  One important consequence of the rate-limited behavior of minimum norm methods can be found in the context of adjoint-based \emph{a posteriori} error estimation with minimum residual methods.
  The dual-weighted residual approach to \emph{a posteriori} error estimation \cite{becker2001optimal}, one of predominant approaches in the literature, requires an adjoint solution with greater accuracy than the primal solution.
  Now, as multiple authors have noticed \cite{fuhrer2017superconvergent,Keith2017Goal,valseth2020goal}, the adjoints of minimum residual methods are, in fact, minimum norm methods.
  Nevertheless, as Valseth \emph{et al.} witnessed in \cite{valseth2020goal}, if one uses a minimum norm method to solve the adjoint problem on the same mesh with higher-order elements, then accuracy may not increase enough and the approach may break down.
\end{remark}

\section{Discussion} \label{sec:discussion}

As mentioned in the introduction, a growing number of non-standard finite element methods with a specific saddle-point structure have appeared in the literature in recent years.
Each of these methods can be characterized by a specific form of PDE-constrained norm minimization problem; cf. \Cref{sub:notation}.
The progenitor of these methods is arguably the $\mcL\mcL^\ast$ (FOSLL*) method introduced by Cai \emph{et al.} in \cite{cai2001first}.

Using the $\mcL\mcL^\ast$ method as an example, we have argued that all standard minimum norm methods are rate limited.
This property arises naturally from the saddle-point structure of these methods, not from any other defining feature in their individual constructions; \emph{e.g.}, discontinuous or solely $H^1$-conforming approximation spaces.
Our conclusions do not say that future research on minimum norm methods is unwarranted.
Indeed, they remain an important class of intrinsically stable finite element methods.

There were several topics of interest not considered in this work because they would be out of scope.
The most important of which is arguably \emph{a posteriori} error estimation and adaptive mesh refinement.
Indeed, a reliable and efficient \emph{a posteriori} error estimator \cite{cai2015div,demkowicz2018dpg} can still be used to drive an adaptive mesh refinement process which will recover optimal convergence rates for all $p$.
Moreover, recent advances on, \emph{e.g.}, energy-corrected methods \cite{egger2014energy} may lead to another possibility to recover optimal convergence rates, even under $h$-uniform mesh refinements.
Another appealing alternative is to further explore the use of weighted norms which have successfully been applied to fully $H^1$-conforming $\mcL\mcL^\ast$ discretizations \cite{manteuffel2005first,lee2007fosll}.
In a different context, weighted norms can also be incorporated in goal-oriented methods; see, e.g., \cite{chaudhry2014enhancing,kergrene2017new}.
We must mention that it is also possible to blend some of the features of minimum norm methods and minimum residual methods; see, \emph{e.g.}, the separate approaches taken \cite{bui2014pde} and \cite{kalchev2018mixed,kalchev2020least}.

This work has allowed us to resolve a contentious confusion in the community at large, with important implications for future research.
One especially important implication is for adjoint-based \emph{a posteriori} error estimation with modern minimum residual methods.
Indeed, if one chooses to apply the dual-weighted residual method \cite{becker2001optimal} \textemdash{} which typically involves solving an adjoint problem with the same mesh but higher-order elements \textemdash{} we can explain why the accuracy of the adjoint solution may not be any better than the accuracy of the primal solution, as pointed out in \cite{valseth2020goal}.

\section*{Acknowledgements} \label{sec:acknowledgements}

I wish to thank Federico Fuentes for helpful discussions and proofreading of the manuscript.
In addition, I extend my sincere gratitude to one anonymous referee who, although advocating for rejection of an earlier version, also provided numerous kind and helpful comments which improved the quality of this work.

The majority of this manuscript was written while the author was in residence at the Institute for Computational and Experimental Research in Mathematics (ICERM) in Providence, RI, during the Advances in Computational Relativity program, supported by the National Science Foundation under Grant No. DMS-1439786.
Final edits were completed while at Lawrence Livermore National Laboratory.

This work was performed under the auspices of the U.S. Department of Energy by Lawrence Livermore National Laboratory under Contract DE-AC52-07NA27344, LLNL-JRNL-826017-DRAFT.

\phantomsection\bibliographystyle{abbrv}
\bibliography{main}

\end{document}